\newtheorem{theorem}{Theorem}
\newtheorem*{theorem*}{Theorem}
\theoremstyle{remark}
\newtheorem{remark}{Remark}
\theoremstyle{plain}
\newtheorem*{ES}{Essential Simplicity (ES)}
\newtheorem*{PCC}{Pair Correlation Conjecture (PCC)}
\definecolor{pink}{rgb}{1,.2,.6}
\definecolor{orange}{rgb}{0.7,0.3,0}
\definecolor{blue}{rgb}{.2,.6,.75}
\definecolor{green}{rgb}{.4,.7,.4}
\definecolor{purple}{RGB}{127,0,255}
\definecolor{myred}{RGB}{137,37,37}
\numberwithin{equation}{section}
\title[Zeta Zeros on the Critical Line ]{Zeta Zeros on the Critical Line }
\author[Goldston]{Daniel A. Goldston}
\address{Department of Mathematics and Statistics, San Jose State University}
\email{daniel.goldston@sjsu.edu}
\author[Suriajaya]{Ade Irma Suriajaya}
\address{Faculty of Mathematics, Kyushu University}
\email{adeirmasuriajaya@math.kyushu-u.ac.jp}
\keywords{Riemann zeta-function, zeros, pair correlation, simple zeros, critical zeros}
\subjclass[2010]{11M06, 11M26}
\begin{document}

\begin{abstract}
Montgomery in 1973 introduced the pair correlation method to study the vertical distribution of Riemann zeta-function zeros. This work assumed the Riemann Hypothesis (RH). One striking application was a short proof that at least 2/3 of zeta-zeros are simple zeros, the first result of its type. Over the last 50 years, most work on pair correlation of zeta-zeros has continued to assume RH. Here we show that if RH could be removed from Montgomery's simple zero proof, then this would also give a proof that 2/3 of the zeros are simple and on the critical line. This idea has been applied in several recent papers to obtain other results on the zeros. 



\end{abstract}
\date{\today}

\maketitle




\section{Introduction}

In 1973 Montgomery \cite{Montgomery73} proved, assuming the Riemann Hypothesis (RH), that at least 2/3 of the zeros of the Riemann zeta-function are simple. Prior to this, it was not known either unconditionally or on RH that there were even infinitely many simple zeros. Currently, by a refined version of Levinson's method, K. Pratt, N. Robles, A. Zaharescu, and D. Zeindler \cite{Pra20} in 2020 proved that at least $41.7\%$ of the zeros of $\zeta(s)$ are on the critical line, and at least $40.7\%$ of the zeros are on the critical line and simple. By large scale computation with careful error analysis, Platt and Trudgian \cite{PlaTru21} in 2021 showed that in the critical strip up to height 3,000,175,332,800, there are exactly 12,363,153,437,138 zeros and all of them are on the critical line and are simple. Conditionally, under the RH assumption one can obtain that at least $67.92\%$ of the zeros of $\zeta(s)$ are simple using Montgomery's pair correlation method \cite{CGL2020}, and with a different method \cite{CGG98,BuiHB} one obtains at least $70.37\%$ of the zeros are simple.

\section{Zeros of the Riemann zeta-function} 
\label{sec2}

Let $s=\sigma +it$ denote any point in the complex plane. The meromorphic function $\zeta(s)$ has a simple pole at $s=1$ with residue 1, and is analytic elsewhere. For $\sigma>1$, we have the Euler product formula $\zeta(s):=\sum_{n=1}^\infty n^{-s} = \prod_p(1- p^{s})^{-1}$, where the product runs over the primes $p$. Thus we see that $\zeta(s)\neq 0$ in the half-plane $\sigma >1$ since the individual terms in the Euler product are not zero and converge to $1$ as $p \to \infty$.
By the functional equation $\zeta(s) = \chi(s) \zeta(1-s)$, where $\chi(s) =2^s\pi^{s-1}\sin{\left(\frac{\pi s}{2}\right)}\Gamma(1-s)$, we now obtain the analytic continuation of $\zeta(s)$ for $\sigma <0$ where we see the only zeros in this half-plane are the {\it trivial zeros} $s = -2,-4,-6,-8, \ldots$ coming from the factor $\sin(\pi s/2)$ in $\chi(s)$. Thus all the complex zeros of $\zeta(s)$ lie within the strip $0\leq \sigma \leq 1$. Each of the two proofs of the prime number theorem in 1896 by Hadamard and by de la Vall{\'e}e Poussin gave different proofs that there are no zeros on the line $\sigma =1$, and therefore all the zeros must lie in the {\it critical strip} $0<\sigma <1$.

There is one further property of the Riemann zeta-function that is so easy to prove that its importance can often be overlooked; namely that there are no zeros on the real axis $s=\sigma$ for $0\le \sigma<1$. This follows from the formula 
\[(1 - 2^{1-s})\zeta(s) = \sum_{n=1}^{\infty} \frac{(-1)^{n+1}}{n^s}, \quad \text{for} \quad 0<\sigma <1, \]
since this shows that $(1-2^{1-\sigma})\zeta(\sigma) > 0$ so that $\zeta(\sigma) < 0$ on the real line for $0<\sigma <1$. We also see immediately from the functional equation that $\zeta(0)=-1/2$.

If an analytic function $F(s)$ is real on a segment of the real axis it will satisfy the reflection principle that $F(\bar{s})=\overline{F(s)}$ in the symmetric region on either side of that segment where $F$ is analytic. Denoting the complex zeros of the Riemann zeta-function by $\rho=\beta+i\gamma$, we see that
$\zeta(\rho) = 0 = \zeta(\bar{\rho})$ and the functional equation gives as well $\zeta(\rho) = 0 = \zeta(1-\rho)$. Thus,
all nontrivial zeros come in pairs symmetric with the real axis, therefore we have the zeros $\rho=\beta+i\gamma$ and $\bar{\rho}=\beta-i\gamma$, and 
zeros with $\beta\neq 1/2$ come in symmetric quadruples with respect to both the real axis and the {\it critical line} (namely, the vertical line $\sigma=1/2$):
$\rho =\beta + i\gamma ,\ \bar{\rho}=\beta-i\gamma, \ 1-\rho = 1-\beta -i\gamma, \ 1-\bar{\rho}= 1-\beta +i\gamma $. This elementary fact turns out to be fundamental later in this paper.

\section{Counting zeros with multi-sets}
Since the zeros are symmetric about the real axis, we only need to study the zeros above the real axis. Analytic function have their zeros counted by the argument principle, which counts zeros with their multiplicity. Thus a zero $\rho$ with multiplicity $m_\rho $ is counted as $m_\rho$ zeros. Therefore, rather than using \lq\lq sets" of zeros we will use in this paper multi-sets of zeros where a zero $\rho$ with multiplicity $m_\rho$ has $m_\rho$ copies of itself in the multiset. Thus we replace the set $ Z(T) = \{ \rho: \ 0<\gamma \le T \}$ of complex zeros $\rho$ up to height $T$ by the multiset 
\begin{equation} \label{calZ(T)} \mathcal{Z}(T) = \{ \rho : 0 < \gamma \le T,\ \rho \ \text{has} \ m_\rho\ \text{copies} \}. \end{equation}
The set $Z(T)$ is called the support of $\mathcal{Z}(T)$, written $\text{Supp}(\mathcal{Z}(T))$, and is the set of \emph{distinct} zeros.

 Let $ N(T)$ denote the number of zeros in the critical strip with $0< \gamma \le T$. In 1905 von Mangoldt proved that, for $T\ge3$,
\begin{equation} \label{N(T)sum} N(T)= | \mathcal{Z}(T)| = \sum_{\rho \in \mathcal{Z}(T)}1 = \frac{T}{2\pi}\log \frac{T}{2\pi} - \frac{T}{2\pi} + O(\log{T}) , \end{equation}
and we have the useful special cases
\begin{equation} \label{N(T)easy}N(T)~\sim~ \frac{T}{2\pi}\log{T} \quad \text{as} \ T\to \infty, \qquad \text{and} \qquad N(T+1)-N(T) = O(\log{T}).\end{equation}


\section{Montgomery's RH proof of simple zeros} 

In this section we assume RH so that $\rho= 1/2+i\gamma $, and prove the following result.
\begin{theorem}[Montgomery] \label{thm1} Assuming RH, at least 2/3 of the zeros of $\zeta(s)$ are simple. \end{theorem}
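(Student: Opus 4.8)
The plan is to run Montgomery's pair correlation argument. Since we are assuming RH we may take $\rho=\tfrac12+i\gamma$, and, following Montgomery, we introduce his normalized pair correlation function
\begin{equation*}
F(\alpha) = F(\alpha,T) := \left(\frac{T}{2\pi}\log T\right)^{-1} \sum_{0<\gamma,\gamma'\le T} T^{i\alpha(\gamma-\gamma')}\, w(\gamma-\gamma'), \qquad w(u) := \frac{4}{4+u^2}.
\end{equation*}
The heart of the matter, and the step I expect to be the main obstacle, is Montgomery's theorem describing $F$: that $F(\alpha)$ is real and even, that $F(\alpha)\ge 0$, and that
\begin{equation*}
F(\alpha) = \left(1+o(1)\right) T^{-2|\alpha|}\log T \;+\; |\alpha| \;+\; o(1)
\end{equation*}
as $T\to\infty$, uniformly for $|\alpha|\le 1$ (at the endpoint $|\alpha|=1$ only an upper bound is needed, which is all we shall use, since the test function chosen below vanishes there). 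The nonnegativity is immediate: $F(\alpha)$ has the form $\sum_{\gamma,\gamma'} c_\gamma\overline{c_{\gamma'}}\,w(\gamma-\gamma')$ with $c_\gamma=T^{i\alpha\gamma}$, and $w$ is positive-definite. The asymptotic formula is where RH does the real work: one applies the Riemann--von Mangoldt explicit formula to the inner sum, which on RH takes a clean shape expressing it through a sum over prime powers $n$ with coefficients involving $\Lambda(n)$; after squaring, the diagonal of the resulting double prime sum contributes, by the prime number theorem, the main term $|\alpha|$, one also picks up the main term $T^{-2|\alpha|}\log T$, and RH controls the error terms throughout.

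Granting Montgomery's theorem, the next step is a Parseval bridge. Choose the Fej\'er-kernel test function
\begin{equation*}
r(u) := \left(\frac{\sin \pi u}{\pi u}\right)^2, \qquad\text{so that}\qquad \widehat{r}(\alpha) = \max\bigl(1-|\alpha|,\,0\bigr),
\end{equation*}
which is nonnegative, has $r(0)=1$, and has Fourier transform supported in $[-1,1]$. Recovering $r$ from $\widehat{r}$ by Fourier inversion and interchanging summation with integration converts the weighted pair sum into
\begin{equation*}
\sum_{0<\gamma,\gamma'\le T} r\!\left(\frac{\log T}{2\pi}(\gamma-\gamma')\right) w(\gamma-\gamma') \;=\; \frac{T}{2\pi}\log T \int_{-1}^{1} \widehat{r}(\alpha)\,F(\alpha)\,d\alpha,
\end{equation*}
the integral collapsing to $[-1,1]$ because $\widehat{r}$ is supported there. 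Substituting the formula for $F$ and using
\begin{equation*}
\int_{-1}^{1} T^{-2|\alpha|}\log T\,\widehat{r}(\alpha)\,d\alpha \;\longrightarrow\; \widehat{r}(0)=1, \qquad \int_{-1}^{1} |\alpha|\,\widehat{r}(\alpha)\,d\alpha = 2\int_0^1(\alpha-\alpha^2)\,d\alpha = \frac{1}{3},
\end{equation*}
the right-hand side above equals $\bigl(\tfrac{4}{3}+o(1)\bigr)N(T)$.

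The final step extracts the simple zeros by positivity. Group the left-hand sum according to distinct zero locations. Since $r\ge 0$ and $w\ge 0$, every term with $\gamma\neq\gamma'$ is nonnegative and may be discarded; what remains is the contribution of pairs of ``copies'' lying at a common location $\gamma_0$ of multiplicity $m_{\gamma_0}$, each such pair contributing $r(0)w(0)=1$. Hence, with the sum below over distinct zeros,
\begin{equation*}
\sum_{0<\gamma_0\le T} m_{\gamma_0}^{2} \;\le\; \Bigl(\tfrac{4}{3}+o(1)\Bigr) N(T).
\end{equation*}
Writing $N_1(T)$ for the number of distinct simple zeros with $0<\gamma_0\le T$, and using $m^2\ge 2m$ for every integer $m\ge 2$, we get $\sum_{\gamma_0} m_{\gamma_0}^{2} \ge N_1(T)+2\bigl(N(T)-N_1(T)\bigr)$, and therefore
\begin{equation*}
2N(T)-N_1(T) \;\le\; \Bigl(\tfrac{4}{3}+o(1)\Bigr)N(T), \qquad\text{i.e.}\qquad N_1(T) \;\ge\; \Bigl(\tfrac{2}{3}+o(1)\Bigr) N(T),
\end{equation*}
which is precisely the assertion that at least $2/3$ of the zeros of $\zeta(s)$ are simple.

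In short, everything from the Parseval bridge onward is a brief and essentially formal deduction; the substance lies entirely in the input---Montgomery's evaluation of $F(\alpha)$ on $[-1,1]$, via the explicit formula, the prime-power sum whose diagonal yields exactly $|\alpha|$, and RH to handle the error terms. It is this main-term input (rather than the trivially valid nonnegativity of $F$) that the remainder of the paper reexamines, with the aim of supplying it under hypotheses weaker than RH and thereby also forcing zeros onto the critical line.
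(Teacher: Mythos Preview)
Your proof is correct and is essentially the same argument as the paper's: both take Montgomery's evaluation of the Fej\'er-kernel pair sum as the key input (the paper quotes it directly as \eqref{fejer0}, without introducing $F(\alpha)$ or the weight $w$, whereas you recover it via the Parseval identity for $F$), and both finish by the same positivity step, since the paper's inequality $\sum_{\text{simple}}1\ge\sum(2-m_\gamma)$ is algebraically identical to your $\sum m_{\gamma_0}^2\ge 2N(T)-N_1(T)$.
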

The proof is based on the following inequality
\begin{equation}\label{fejer0}
\sum_{\substack{\rho,\rho' \in \mathcal{Z}(T)\\ \gamma=\gamma'}}1 \le \sum_{\rho,\rho' \in \mathcal{Z}(T)} \left(\frac{\sin (\frac12 (\gamma-\gamma')\log{T})}{\frac12 (\gamma-\gamma')\log{T}}\right)^2 
= \left(\frac43+o(1)\right) \frac{T}{2\pi}\log{T},\qquad \text{as}
\quad T\to\infty.
\end{equation}
Here the first inequality is trivially true. The asymptotic formula for the Fej{\'e}r kernel sum over differences of zeros was proved by Montgomery assuming RH; the interested reader may see the proof in \cite{Montgomery73}. The Fej{\'e}r kernel sum itself is obtained without RH from the formula 
\begin{equation} \label{fejer1} \sum_{\rho,\rho' \in \mathcal{Z}(T)} \left(\frac{\sin (\frac12 (\gamma-\gamma')\log{T})}{\frac12 (\gamma-\gamma')\log{T}}\right)^2 = 2\int_0^1 \bigg|\sum_{\rho\in \mathcal{Z}(T)}T^{i\alpha \gamma} \bigg|^2 (1-\alpha) \, d\alpha ,\end{equation} 
which follows immediately from the elementary formula
\begin{equation}\label{fejer2} 2\, {\rm Re}\int_0^1e^{iw\alpha} (1-\alpha) \, d\alpha = \left(\frac{\sin(\frac{1}{2}w)}{\frac12 w }\right)^2.\end{equation}
Thus in \eqref{fejer0} the double sums are over two copies of the sequence of zeros with $0<\gamma \le T$, where each sequence already has $m_\gamma$ copies of each zero of multiplicity $m_\gamma$.

Now consider the first sum in \eqref{fejer0} with the condition that $\gamma=\gamma'$. If we think of the terms as ordered pairs $(\gamma, \gamma')$ where $\gamma=\gamma'$, then a simple zero gets counted once, but a double zero with $m_\gamma =2$ gets counted 4 times, since $\gamma_1=\gamma_2 = \gamma_1'=\gamma_2'$ and we get the 4 terms $(\gamma_1,\gamma_1')$, $(\gamma_1,\gamma_2')$, $(\gamma_2,\gamma_1')$, $(\gamma_2,\gamma_2')$. Thus a single zero of multiplicity $m_\gamma$ gets counted $m_\gamma^2$ times. The proof of Theorem \ref{thm1} is now just three lines.

\begin{proof}[Proof of Theorem \ref{thm1}]
Since by RH, $\gamma=\gamma'$ is true if and only if $\rho = \rho'$, we have
\begin{equation}\label{multiplicityweight}
\sum_{\substack{\rho,\rho' \in \mathcal{Z}(T)\\ \gamma=\gamma'}}1
= \sum_{\rho \in \mathcal{Z}(T)} m_\rho
\le \left(\frac43+o(1)\right) \frac{T}{2\pi}\log{T},\qquad \text{as}
\qquad T\to\infty. \end{equation}
Note the simple inequality
\begin{equation}\label{Mont_simple}
    \sum_{\substack{\rho \in \mathcal{Z}(T) \\ \rho \ \text{simple}}}1
    +
    \sum_{\rho \in \mathcal{Z}(T)} m_\rho
    =
    2 \sum_{\substack{\rho \in \mathcal{Z}(T) \\ m_\rho=1}} 1
    +
    \sum_{\substack{\rho \in \mathcal{Z}(T) \\ m_\rho\ge2}} m_\rho
    \ge 2 \sum_{\rho \in \mathcal{Z}(T)} 1 = 2N(T).
\end{equation}
Then using \eqref{N(T)sum} and \eqref{N(T)easy}, we have as $T\to\infty$,
\[\begin{split}
\sum_{\substack{\rho \in \mathcal{Z}(T) \\ \rho \ \text{simple}}}1
&\ge
2N(T)-\sum_{\rho \in \mathcal{Z}(T)} m_\rho
\ge \left(2-\frac43+o(1)\right)\frac{T}{2\pi}\log{T}
= \left(\frac23+o(1)\right)\sum_{\rho \in \mathcal{Z}(T)}1.
\end{split}\]
\end{proof}

\section{Montgomery's Simple Zero Method if RH is not assumed}

For 50 years it never seemed to occur to anyone (including the first-named author of this paper) to seriously think about this proof when RH is not assumed. As we will see in a moment, the key idea here is that, despite the title of this section, not assuming RH is not enough; what you have to do is take seriously the possibility that RH is false. The result of this mental block is we spent almost two years proving \eqref{fejer0} holds with conditions weaker than RH without seeing the connection to zeros on the critical line, which we call for brevity {\it critical zeros} in the rest of this exposition.

Recall for the first sum in \eqref{fejer0} and \eqref{multiplicityweight} we saw that, assuming RH, the condition $\gamma = \gamma'$ is equivalent to $\rho =\rho'$. But if RH is false then this is also sometimes false, since then there are in $\mathcal{Z}(T)$ at least two symmetric zeros $\rho = \beta +i\gamma$ and $1-\overline{\rho} = 1-\beta+i\gamma$ with $\beta \neq 1/2$.
Hence, if $\beta \neq 1/2$ then the pairs of zeros $(\beta+i\gamma, 1-\beta+i\gamma)$ and $(1-\beta+i\gamma, \beta+i\gamma)$ are also solutions of the equation $\gamma=\gamma'$ on the horizontal line $t=\gamma$. We call these \lq\lq symmetric diagonal terms". Thus, in addition to the diagonal terms $\rho=\rho'$ in \eqref{multiplicityweight}, which occur for every zero whether RH is assumed or not, we also need to include the symmetric diagonal terms for zeros with $\beta\neq 1/2$.
This is not the end. Since we are working with pairs of zeros, if there are three or more zeros on the same horizontal line, we can also have terms which are not diagonal and not symmetric.

To conclude, we have
\begin{align} \label{key}
\sum_{\substack{\rho,\rho' \in \mathcal{Z}(T)\\ \gamma=\gamma'}} 1 \ = \
\sum_{\rho \in \mathcal{Z}(T)}m_{\rho}
+
\sum_{\substack{\rho \in \mathcal{Z}(T)\\ \beta \neq \frac{1}{2}}}m_{\rho}
+
\sum_{\substack{\rho,\rho' \in \mathcal{Z}(T)\\ \ \beta+\beta' \neq 1,\, \gamma = \gamma'}} 1,
\end{align}
where the first sum on the right-hand side comes from the diagonal terms, the second is given by the symmetric diagonal terms, and the rest in the third sum are non-symmetric horizontal terms.

We can now immediately prove our RH-free version of Montgomery's simple zero result.

\begin{theorem} \label{thm2} Suppose there exists a constant $\mathbf{C}$ where $1\le\mathbf{C}<2$ and that, as $T\to \infty$, 
\begin{equation} \label{thm2input}
\sum_{\substack{\rho,\rho' \in \mathcal{Z}(T) \\ \gamma=\gamma'}} 1 \le \Big(\mathbf{C}+o(1)\Big)\frac{T}{2\pi}\log{T}.
\end{equation}
Then asymptotically at least the proportion $2-\mathbf{C}$ of the zeros of $\zeta(s)$ are simple, and at least the proportion $2-\mathbf{C}$ of the zeros of $\zeta(s)$ are on the critical line. 
\end{theorem}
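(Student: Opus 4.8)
The plan is to read off all three assertions from the identity \eqref{key} together with the hypothesis \eqref{thm2input} by elementary multiplicity bookkeeping, with no new analytic input. As in \eqref{N(T)sum}, every sum over $0<\gamma\le T$ counts zeros with multiplicity, and I write $N_s(T)$ for the number of simple zeros, $N_c(T):=\sum_{\substack{0<\gamma\le T\\ \beta=1/2}}1$ for the number of critical zeros (counted with multiplicity), and $N_{s,c}(T)$ for the number of distinct simple critical zeros, all with $0<\gamma\le T$. The first step is to identify the three sums on the right of \eqref{key}: the third is a count of ordered pairs, hence $\ge 0$; the first is the diagonal sum $\sum_{0<\gamma\le T}m_\rho$ of \eqref{multiplicityweight}, in which a zero of multiplicity $m$ is weighted by $m^2$; and the second is $\sum_{\substack{0<\gamma\le T\\ \beta\neq 1/2}}m_\rho$, since an off-line zero $\rho$ is matched with $1-\bar\rho$, which by the functional equation has the same multiplicity, so the symmetric-diagonal pairs attached to $\rho$ number $m_\rho^2$. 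Hence \eqref{thm2input}, using \eqref{N(T)easy} to replace $\tfrac{T}{2\pi}\log T$ by $(1+o(1))N(T)$, reduces to
\begin{equation}\label{reduced}
\sum_{0<\gamma\le T}m_\rho\ +\ \sum_{\substack{0<\gamma\le T\\ \beta\neq 1/2}}m_\rho\ \le\ \Big(C+o(1)\Big)N(T),
\end{equation}
and the whole theorem is a matter of extracting information from \eqref{reduced}.

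For the simple-zero claim I would discard the nonnegative second sum in \eqref{reduced}, leaving $\sum_{0<\gamma\le T}m_\rho\le(C+o(1))N(T)$, and then run the two-line argument of Theorem \ref{thm1} verbatim: $N_s(T)\ge \sum_{0<\gamma\le T}(2-m_\rho)=2N(T)-\sum_{0<\gamma\le T}m_\rho\ge(2-C+o(1))N(T)$. For the critical-zero claim — the one genuinely new point — I would instead keep both sums but use the trivial lower bound $\sum_{0<\gamma\le T}m_\rho\ge\sum_{0<\gamma\le T}1=N(T)$; feeding this into \eqref{reduced} isolates the off-line part, $\sum_{\substack{0<\gamma\le T\\ \beta\neq 1/2}}m_\rho\le(C-1+o(1))N(T)$. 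Since $m_\rho\ge 1$, the left side dominates $\sum_{\substack{0<\gamma\le T\\ \beta\neq 1/2}}1$, which is exactly the number $N(T)-N_c(T)$ of off-line zeros counted with multiplicity. Hence $N(T)-N_c(T)\le(C-1+o(1))N(T)$, i.e. $N_c(T)\ge(2-C+o(1))N(T)$; this is nonvacuous precisely because $C<2$.

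For the third claim I would combine the first two purely combinatorially. Split the distinct zeros with $0<\gamma\le T$ into four classes — simple critical, simple off-line, multiple critical, multiple off-line — and let $P=N_{s,c}(T)$ and $Q$ be the sizes of the first two and $R,S$ the total multiplicities of the last two. Then $N(T)=P+Q+R+S$, $N_s(T)=P+Q$, and $N_c(T)=P+R$, so $N_{s,c}(T)=P=N_s(T)+N_c(T)-N(T)+S\ge N_s(T)+N_c(T)-N(T)\ge (2-C)+(2-C)-1+o(1)=3-2C+o(1)$ as a proportion of $N(T)$, which is positive exactly when $C<3/2$. (One can in fact do slightly better: running the same bookkeeping directly through \eqref{reduced} rather than through the two separate bounds yields the stronger $N_{s,c}(T)\ge(2-C+o(1))N(T)$; but the modular argument above already delivers the stated $3-2C$.)

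The step I expect to be the crux is the critical-zero bound, and the single idea underpinning the whole theorem is that the ``diagonal'' sum $\sum_{0<\gamma\le T}m_\rho$ in \eqref{key} (with its implicit $m^2$ weighting) is automatically at least $N(T)$, so that any bound of the shape \eqref{thm2input} with $C<2$ forces the ``symmetric-diagonal'' sum over off-line zeros to be strictly smaller than $N(T)$ — and controlling that sum controls the count of off-line zeros because multiplicities are $\ge 1$. There is no analytic difficulty once \eqref{thm2input} is granted; the only thing to be careful about is keeping the three types of pairs in \eqref{key} cleanly separated and never conflating the multiplicity of a zero with a count of distinct zeros.
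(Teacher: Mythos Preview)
Your argument is correct and matches the paper's almost step for step: drop the appropriate nonnegative sums in \eqref{key}, invoke $\sum_{0<\gamma\le T}m_\rho\ge N(T)$ to isolate the off-line contribution, and combine. For the third claim the paper instead writes the simple-critical count as (simple zeros) minus (simple off-line zeros) and bounds the latter by \eqref{thm1proof2}, reaching the same $3-2C$; your parenthetical that working directly from \eqref{thm2proofeq} actually gives the stronger bound $N_{s,c}(T)\ge(2-C+o(1))N(T)$ is correct and goes beyond what the paper states.
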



We first remark on the restrictions on the constant $\mathbf{C}$ in our assumption.
Note that
\begin{equation}\label{lower_ineq}
\sum_{\substack{\rho,\rho' \in \mathcal{Z}(T)\\ \gamma=\gamma'}} 1
~\ge~
\sum_{\substack{\rho,\rho' \in \mathcal{Z}(T)\\ \rho =\rho'}} 1
~=~
\sum_{\rho \in \mathcal{Z}(T)} m_{\rho}
~\ge~
\sum_{\rho \in \mathcal{Z}(T)} 1
= N(T)
~\sim~
\frac{T}{2\pi}\log{T}
\end{equation}
as $T\to \infty$.
Thus $\mathbf{C}$ in our assumption has to satisfy $\mathbf{C}\ge1$. We also restrict $\mathbf{C}<2$, since for $\mathbf{C}\ge 2$ the method fails to give a positive proportion. 

\begin{proof}[Proof of Theorem \ref{thm2}] 

Substituting \eqref{key} into \eqref{thm2input}, we have
\begin{equation} \label{thm2proofeq} 
\sum_{\rho \in \mathcal{Z}(T)}m_{\rho}+ \sum_{\substack{\rho \in \mathcal{Z}(T)\\ \beta \neq \frac{1}{2}}}m_{\rho}+ \sum_{\substack{\rho,\rho' \in \mathcal{Z}(T)\\ \ \beta+\beta' \neq 1,\, \gamma = \gamma'}} 1\le \Big(\mathbf{C}+o(1)\Big) \frac{T}{2\pi}\log{T},\qquad \text{as}
\quad T\to\infty. \end{equation}
In this inequality, removing any sum on the left-hand side results in a smaller quantity since each is non-negative.
First, we have 
\[ \sum_{\rho \in \mathcal{Z}(T)} m_\rho \leq\Big(\mathbf{C}+o(1)\Big) \frac{T}{2\pi}\log{T},
\qquad \text{as} \quad T\to\infty, \]
which is \eqref{multiplicityweight} with $\mathbf{C}= 4/3$, and the identical argument \eqref{Mont_simple} used there gives
\begin{equation} \label{thm1proof1}\sum_{\substack{\rho \in \mathcal{Z}(T) \\ \rho\ \text{simple}}}1 \ge \Big(2-\mathbf{C}+o(1)\Big)\frac{T}{2\pi}\log{T},
\qquad \text{as} \quad T\to\infty, \end{equation}
which proves that the proportion of simple zeros of $\zeta(s)$ is $\ge 2-\mathbf{C}$.

Next, for the symmetric diagonal terms, we have, as $T\to\infty$,
\[ \sum_{\substack{\rho \in \mathcal{Z}(T)\\ \beta\neq 1/2}} m_\rho \le \Big(\mathbf{C}+o(1)\Big) \frac{T}{2\pi}\log{T} -\sum_{ \rho \in \mathcal{Z}(T)} m_\rho. \]
Recalling the lower bound in \eqref{lower_ineq}, we obtain
\begin{equation} \label{thm1proof2}\sum_{\substack{\rho \in \mathcal{Z}(T)\\ \beta\neq 1/2}} m_\rho \le \Big(\mathbf{C}-1+o(1)\Big) \frac{T}{2\pi}\log{T},
\qquad \text{as} \quad T\to\infty.
\end{equation}
Thus 
\[ \sum_{\substack{\rho \in \mathcal{Z}(T)\\ \beta= 1/2}}1= \sum_{\rho \in \mathcal{Z}(T)}1 - \sum_{\substack{\rho \in \mathcal{Z}(T)\\ \beta\neq 1/2}}1 \ge N(T)-\sum_{\substack{\rho \in \mathcal{Z}(T)\\ \beta\neq 1/2}} m_\rho \ge \Big(2-\mathbf{C}+o(1)\Big) \frac{T}{2\pi}\log{T}
\] 
as $T\to\infty$,
and we have proved that the proportion of zeros of $\zeta(s)$ on the critical line is $\ge 2-\mathbf{C}$.
\end{proof}

\section{Simple and critical zeros using diagonal and symmetric diagonal terms}

While we cannot improve either result in Theorem \ref{thm2}, we can obtain further results on zeros that are both simple and critical, simple or critical, and also the average of the proportion of simple zeros and the proportion of critical zeros.

\begin{theorem} \label{thm3} Suppose there exists a constant $ \mathbf{C}\ge 1$ and that, as $T\to \infty$, 
\begin{equation} \label{thm3input}
\sum_{\rho \in \mathcal{Z}(T)} m_{\rho} + \sum_{\substack{\rho \in \mathcal{Z}(T) \\ \beta \neq \frac{1}{2}}} m_{\rho}
\le \Big(\mathbf{C}+o(1)\Big) \frac{T}{2\pi}\log{T}. \end{equation}
Then asymptotically,
\begin{enumerate}[label={(\roman*)},itemsep=4pt]
\item \label{thm3-i} the proportion of zeros of $\zeta(s)$ which are simple and on the critical line is $\ge 2-\mathbf{C}$,
\item \label{thm3-ii} the average of the proportions of simple zeros and of critical zeros of $\zeta(s)$ is $\ge (3-\mathbf{C})/2$,
\item \label{thm3-iii} the proportion of zeros of $\zeta(s)$ which are either simple or critical or both is $\ge (4-\mathbf{C})/3$.
\end{enumerate}
\end{theorem}

\begin{remark} Notice that the proof of Theorem \ref{thm2} actually only uses the assumption \eqref{thm3input}, and \ref{thm3-i} of Theorem \ref{thm3} already implies Theorem \ref{thm2}. The diagonal and symmetric diagonal terms are easy to analyze in terms of the properties of \lq \lq simple" and \lq \lq critical", while we have dropped the non-symmetric horizontal terms which are more complicated. We return to this in the next section where we make use of \lq \lq horizontal multiplicity".
\end{remark}

\begin{remark} Note that if $\mathbf{C}=4/3$ as in Theorem \ref{thm1}, then by \ref{thm3-ii} of Theorem \ref{thm3} the average of the proportions of simple zeros and critical zeros is asymptotically $\ge 5/6$. Thus at least one of these proportions is $\ge 5/6$, although the method doesn't determine which. This agrees with the RH result in Theorem \ref{thm1} where the proportion of simple zeros is $\ge 2/3$ and proportion of critical zeros is 1, thus giving an average proportion $\ge 5/6$. We obtain in \ref{thm3-iii} of Theorem \ref{thm3} that $\ge 8/9$ of the zeros are either simple or critical or both. Equivalently, the proportion of zeros that are multiple zeros off the critical line is $\le 1/9$ . 
\end{remark}

Recall the multiset $\mathcal{Z(T)}$ from \eqref{calZ(T)}, and the set of distinct zeros $Z(T)= \{\rho: 0<\gamma\le T\} =\text{Supp}(\mathcal{Z}(T))$. We will consider $Z(T)$ the universe in what follows and take subsets $A\subset Z(T)$, and define the complement of $A'$ in $Z(T)$ to be
$A'= \{\rho: \rho\notin A,\ 0<\gamma\le T\}$. Similarly, we define the multiset $\mathcal{A}$ formed from $A$ by multiplicity and the complement $\mathcal{A'}$ in the universe $\mathcal{Z(T)}$ by
\begin{equation} \label{MultsetA} \mathcal{A} =\{ \rho \in A: 0 < \gamma \le T,\ \rho \ \text{has} \ m_\rho\ \text{copies} \}, \qquad \mathcal{A'} =\{ \rho \notin \mathcal{A}: 0 < \gamma \le T\}. \end{equation}
We will count the number of elements of $\mathcal{A}$ in two ways: $N(\mathcal{A})= |\mathcal{A}|$ where we count zeros with multiplicity, and $N^*(\mathcal{A})$ where we count zeros with multiplicity and also weighted by $m_\rho$. Thus
\begin{equation} \label{N,N*} N(\mathcal{A}) := \sum_{\substack{\rho \in \mathcal{A}\\ 0<\gamma \le T}}1, \qquad N^*(\mathcal{A}) := \sum_{\substack{\rho \in \mathcal{A}\\ 0<\gamma \le T}}m_\rho .\end{equation}
Note also that
\[ N(\mathcal{A'}) = N(\mathcal{Z}) - N(\mathcal{A})=N(T) - N(\mathcal{A}) .\]

We will now work with the sets of simple zeros $S$ and critical zeros $C$ defined by
\begin{equation} \label{S,C}
S := \{ \rho :\ 0<\gamma \le T,\ \rho\ \text{simple}\ (m_\rho=1)\}, \qquad C:= \{ \rho : \ 0<\gamma \le T, \ \rho \ \text{critical}\ (\beta=1/2)\}.\end{equation}
We thus have the multisets $\mathcal{S}$ and $\mathcal{C}$ as defined in \eqref{MultsetA}; note here $\mathcal{S}=S$.
With this preparation, we are now ready to prove Theorem \ref{thm3}.

\begin{proof}[Proof of Theorem \ref{thm3}]

With the usual Venn diagram for the two sets $S$ and $C$, we partition the zeros in $Z(T)$ into the usual four disjoint sets according to being or not being simple/critical, form the corresponding multisets, and count with multiplicity to obtain
\begin{equation} \label{countPartition}
N(\mathcal{Z})= N(T) = N(\mathcal{S}\cap\mathcal{C}) + N(\mathcal{S'}\cap\mathcal{C}) + N(\mathcal{S}\cap\mathcal{C'}) + N(\mathcal{S'}\cap\mathcal{C'}). \end{equation}
The same decomposition in \eqref{thm3input} gives on counting with multiplicity and weighting by multiplicity
\begin{equation} \label{count*} N^*(\mathcal{S}\cap\mathcal{C}) + N^*(\mathcal{S'}\cap\mathcal{C}) + 2N^*(\mathcal{S}\cap\mathcal{C'}) + 2N^*(\mathcal{S'}\cap\mathcal{C'})\le (\mathbf{C}+o(1))N(T).\end{equation}
Since $N^*(\mathcal{S}\cap\mathcal{C}) = N(\mathcal{S}\cap\mathcal{C})$ and $N^*(\mathcal{S}\cap\mathcal{C'}) = N(\mathcal{S}\cap\mathcal{C'})$, 
this equation becomes 
\[ N(\mathcal{S}\cap\mathcal{C}) + N^*(\mathcal{S'}\cap\mathcal{C}) + 2N(\mathcal{S}\cap\mathcal{C'}) + 2N^*(\mathcal{S'}\cap\mathcal{C'})\le (\mathbf{C}+o(1))N(T).\]
We eliminate the last two $N^*$'s in this equation using 
$N^*(\mathcal{S'}\cap\mathcal{C})\ge 2N(\mathcal{S'}\cap\mathcal{C})$ and $N^*(\mathcal{S'}\cap\mathcal{C'}) \ge 2N(\mathcal{S'}\cap\mathcal{C'})$, where these are equalities if $\mathcal{S'}$ only contains double zeros. Thus we obtain
\begin{equation} \label{finalreduction} N(\mathcal{S}\cap\mathcal{C}) + 2N(\mathcal{S'}\cap\mathcal{C}) + 2N(\mathcal{S}\cap\mathcal{C'}) + 4N(\mathcal{S'}\cap\mathcal{C'})\le (\mathbf{C}+o(1))N(T).\end{equation}
Now by \eqref{countPartition} we see 
\[ N(\mathcal{S'}\cap\mathcal{C}) + N(\mathcal{S}\cap\mathcal{C'}) + N(\mathcal{S'}\cap\mathcal{C'}) = N(T) - N(\mathcal{S}\cap\mathcal{C}),\]
and substituting this into \eqref{finalreduction}, we obtain 
\[N(\mathcal{S}\cap\mathcal{C}) + 2(N(T) - N(\mathcal{S}\cap\mathcal{C}))+ 2N(\mathcal{S'}\cap\mathcal{C'}) \le (\mathbf{C}+o(1))N(T). \]
Thus we obtain
\begin{equation} \label{mainfirstresult}N(\mathcal{S}\cap\mathcal{C}) \ge (2-\mathbf{C} +o(1))N(T) + 2N(\mathcal{S'}\cap\mathcal{C'}). \end{equation}
From this we immediately have $ N(\mathcal{S}\cap\mathcal{C}) \ge (2-\mathbf{C} +o(1))N(T)$, which proves \ref{thm3-i} of Theorem \ref{thm3}. 

To prove \ref{thm3-ii} of Theorem \ref{thm3}, we begin with the equalities
\begin{equation}\label{SunionC} N(\mathcal{S}\cup\mathcal{C}) = N(\mathcal{S}) +N(\mathcal{C}) - N(\mathcal{S}\cap\mathcal{C}) \quad \text{and} \quad N(\mathcal{S}\cup\mathcal{C}) = N(\mathcal{S}\cap\mathcal{C'}) +N(\mathcal{S}\cap\mathcal{C})+N(\mathcal{S'}\cap\mathcal{C}) ,\end{equation}
which on combining gives 
\[ N(\mathcal{S}) + N(\mathcal{C}) = 2N(\mathcal{S}\cap\mathcal{C})+N(\mathcal{S'}\cap\mathcal{C})+N(\mathcal{S}\cap\mathcal{C'}) .\]
On applying \eqref{countPartition}, this gives
\[ N(\mathcal{S}) + N(\mathcal{C}) = N(\mathcal{S}\cap\mathcal{C}) + N(T) - N(\mathcal{S'}\cap\mathcal{C'}), \]
and by \eqref{mainfirstresult} produces
\[ N(\mathcal{S}) + N(\mathcal{C}) \ge (3-\mathbf{C}+o(1)) N(T) + N(\mathcal{S'}\cap\mathcal{C'}) \ge (3-\mathbf{C}+o(1)) N(T).\]

To prove \ref{thm3-iii} of Theorem \ref{thm3}, first note that by \eqref{finalreduction} and the second equation in \eqref{SunionC} that 
\[ 2N(\mathcal{S}\cup\mathcal{C})-N(\mathcal{S}\cap\mathcal{C}) + 4N(\mathcal{S'}\cap\mathcal{C'})\le (\mathbf{C}+o(1))N(T).\]
Next, since $(\mathcal{S}\cup\mathcal{C})' = \mathcal{S'}\cap\mathcal{C'}$, we have $N( \mathcal{S'}\cap\mathcal{C'})= N(T)-N(\mathcal{S}\cup\mathcal{C})$, and substituting this gives 
\[ (4 -\mathbf{C}+o(1) )N(T) \le 2N(\mathcal{S}\cup\mathcal{C}) + N(\mathcal{S}\cap\mathcal{C})\le 3N(\mathcal{S}\cup\mathcal{C}).\]

\end{proof}

\section{Simple and critical zeros using horizontal multiplicity}

Soundararajan pointed out to us an alternative approach for obtaining the results in the last two sections. He also showed us \ref{thm3-iii} of Theorem \ref{thm3}.

In place of \eqref{thm2input} and \eqref{thm2proofeq}, we now use
\begin{equation} \label{Hmult1}
\sum_{\substack{\rho,\rho' \in \mathcal{Z}(T) \\ \gamma=\gamma'}} 1 = \sum_{\rho \in \mathcal{Z}(T)}H(\gamma) \le \Big(\mathbf{C}+o(1)\Big) \frac{T}{2\pi}\log{T},
\quad \text{where} \quad
H(\gamma) := \sum_{\substack{\rho' \in \mathcal{Z}(T)\\ \gamma'=\gamma}} 1.
\end{equation}
If we assume RH, then $H(\gamma) = m_\rho = m_\gamma$ the usual multiplicity of the zero $\rho$. Without RH, then $H(\gamma)$ counts with multiplicity the number of zeros on the horizontal line $t=\gamma$. Thus $H(\gamma)$ is a \lq \lq horizontal multiplicity" for the multiset of zeros on each horizontal line $t=\gamma$. Thus we see, for example:

\begin{itemize}
    \item If $H(\gamma)=1$, then on the line $t=\gamma$, we have one simple zero on the critical line.
    \item If $H(\gamma)=2$, then on the line $t=\gamma$, we have either a double zero on the critical line or a pair of two symmetric simple zeros. 
    \item If $H(\gamma)=3$, then on the line $t=\gamma$, we have either a triple zero on the critical line, or a simple zero on the critical line and a pair of two symmetric simple zeros.
\end{itemize}
Note that to have at least a double zero off the critical line on the line $t=\gamma$, $H(\gamma)$ needs to be at least $4$.

\begin{proof}[Proof of Theorem \ref{thm3}]
Note $\mathcal{Z}(T)$ is partitioned into disjoint multisets of zeros by 
\[ \mathcal{H}(k) := \{\rho \in \mathcal{Z}(T): H(\gamma)=k\}, \qquad k\in \mathbb{N}. \] 
We also see from above that 
\[
\mathcal{H}(1) \subset \mathcal{S}\cap\mathcal{C},\quad
\mathcal{H}(2) \subset (\mathcal{S'}\cap\mathcal{C})\cup (\mathcal{S}\cap\mathcal{C'}),\quad
\mathcal{H}(3) \subset (\mathcal{S}\cap \mathcal{C})\cup(\mathcal{S'}\cap \mathcal{C})\cup (\mathcal{S}\cap\mathcal{C'})
= \mathcal{S}\cup\mathcal{C}.
\]

We now assume \eqref{Hmult1}.
If $H(\gamma)=1$, using an argument analogous to \eqref{Mont_simple}, we have
\[ N(\mathcal{S}\cap\mathcal{C}) \ge \sum_{\substack{\rho \in \mathcal{Z}(T)\\ H(\gamma)=1}} 1 \ge \sum_{\rho \in \mathcal{Z}(T)} (2-H(\gamma))\ge (2-\mathbf{C}+o(1))N(T), \]
which proves \ref{thm3-i} of Theorem \ref{thm3}.
Next, 
\[ \begin{split} (3 -\mathbf{C}+o(1) )N(T)&\le \sum_{\rho \in \mathcal{Z}(T)} (3-H(\gamma))\le 2\sum_{\substack{\rho \in \mathcal{Z}(T)\\ H(\gamma)=1}} 1 +\sum_{\substack{\rho \in \mathcal{Z}(T)\\ H(\gamma)=2}} 1 \\ &
\le 2 N(\mathcal{S}\cap\mathcal{C}) + N(\mathcal{S}'\cap\mathcal{C}) + N(\mathcal{S}\cap\mathcal{C}') = N(\mathcal{S}) + N(\mathcal{C}), 
\end{split} \]
which proves \ref{thm3-ii} of Theorem \ref{thm3}. For \ref{thm3-iii} of Theorem \ref{thm3}, we have, since $\mathcal{H}(3)$ is disjoint from $\mathcal{H}(1)$ and $\mathcal{H}(2)$ and contained in $\mathcal{S}\cup\mathcal{C}$,  
\[ \begin{split}  (4-\mathbf{C}+o(1))N(T) &\le \sum_{\rho \in \mathcal{Z}(T)} (4-H(\gamma))\le 3\sum_{\substack{\rho \in \mathcal{Z}(T)\\ H(\gamma) =1 }} 1 +2\sum_{\substack{\rho \in \mathcal{Z}(T)\\ H(\gamma)=2}} 1 + \sum_{\substack{\rho \in \mathcal{Z}(T)\\ H(\gamma)=3}} 1\\ & \le 3N(\mathcal{S}\cap\mathcal{C}) + 2N(\mathcal{S}'\cap\mathcal{C}) + 2N(\mathcal{S}\cap\mathcal{C}') = 2N(\mathcal{S}\cup\mathcal{C}) +N(\mathcal{S}\cap\mathcal{C})
\\& \le
3N(\mathcal{S}\cup\mathcal{C}).
\end{split}\]
\end{proof}

\section{Two Results on Critical Zeros}

Using Theorem \ref{thm3}, we proved in \cite{BGST-CL} the following result.
\begin{theorem}[Baluyot, Goldston, Suriajaya, Turnage-Butterbaugh]\label{thm4} Assume that all of the zeros of $\zeta(s)$ with $T<\gamma \le 2T$ lie in the region
\begin{equation} B_b = \{ s=\sigma + it \ : \ \Big|\sigma - \frac12\Big|< \frac{b}{2\log{T}},\ T< t\le 2T \}.\end{equation} Then if $b=0.3185$, we have for the zeros of $\zeta(s)$ with $T<\gamma \le 2T$ that at least 2/3 are simple and on the critical line. 
\end{theorem}

We define the {\it average spacing between consecutive zeros} by 
\begin{equation} \label{averagespacing} \frac{\text{Length of}\ [0,T]}{N(T)} \sim \frac{T}{\frac{T}{2\pi}\log{T}} \sim \frac{2\pi}{\log{T}}, \quad \text{as} \ T\to \infty. \end{equation}
Thus in Theorem \ref{thm4} we are assuming all the zeros are within a vertical distance of $b/(4\pi)$ times the average spacing between consecutive zeros at height $T$. 

The following conjecture gives an asymptotic formula for the number of pairs of zeros counted with multiplicity within a distance $\lambda$ times the average spacing.

\begin{PCC}\label{PCC} For fixed $\lambda>0$ and $T\to\infty$,
\[
\sum_{\substack{\rho,\rho' \in \mathcal{Z}(T) \\ 0< \gamma-\gamma' \le \frac{2\pi\lambda}{\log{T}}}} 1 \sim
 \left(\int_0^\lambda\left(1-\left(\frac{\sin{\pi u}}{\pi u}\right)^2\right) du \right)\frac{T}{2\pi}\log{T}.
\]
\end{PCC}
\noindent Montgomery first made this conjecture in \cite{Montgomery73}. From this conjecture sprung the discovery that the zeros of $L$-functions obey a random matrix eigenvalue model; for $\zeta(s)$ this is the Gaussian Unitary Ensemble (GUE) model.

Since \hyperref[PCC]{{\bf PCC}} only concerns the non-zero vertical distance between zeros, it is unaffected by RH and nothing in \hyperref[PCC]{{\bf PCC}} by itself implies directly that any zeros are on the critical line.
Montgomery's reasoning to support \hyperref[PCC]{{\bf PCC}} used RH, but Gallagher and Mueller \cite{GaMu78} proved that \hyperref[PCC]{{\bf PCC}} implies asymptotically $100\%$ of the zeros are simple using a different method based on unconditional results of Selberg and Fujii. 
However, since RH was assumed and used in many parts of \cite{GaMu78}, we clarified this in \cite{GLSS1} to show the results on \hyperref[PCC]{{\bf PCC}} hold without RH.
As a consequence, by Theorem \ref{thm2}, we also obtain that asymptotically $100\%$ of the zeros are on the critical line.

\begin{theorem}[Goldston, Lee, Suriajaya, Schettler] \label{thm5}
The Pair Correlation Conjecture implies
asymptotically 100\% of the zeros of the $\zeta(s)$ are simple and on the critical line.
\end{theorem}

The proof of Theorem \ref{thm5} actually depends on the following property of \hyperref[PCC]{{\bf PCC}}.
\begin{ES} \label{ES}
If $\lambda \to 0$ as $ T\to \infty$, then we have 
\begin{equation} \label{ESeq} \sum_{\substack{\rho,\rho' \in \mathcal{Z}(T) \\ | \gamma-\gamma'| \le \frac{2\pi\lambda}{\log{T}}}} 1 = \Big(1+o(1)\Big)\frac{T}{2\pi} \log{T}. \end{equation}
\end{ES} 
Clearly \eqref{ESeq} implies $\mathbf{C}=1$, thus Theorems \ref{thm2} and \ref{thm3} follows from \hyperref[ES]{{\bf ES}}. This reduces the proof of Theorem \ref{thm5} to proving without RH that \hyperref[PCC]{{\bf PCC}} implies \hyperref[ES]{{\bf ES}}.

Other pair correlation conjectures besides \hyperref[PCC]{{\bf PCC}} also imply \hyperref[ES]{{\bf ES}}.
In \cite{GLSS2} we proved that the Alternative Hypothesis (AH) gives a different pair correlation density function from \hyperref[PCC]{{\bf PCC}}, which in one form implies \hyperref[ES]{{\bf ES}} and thus Theorem \ref{thm4} with \hyperref[PCC]{{\bf PCC}} replaced by that form of AH.

We mention that Selberg \cite{SelCollected2} and Bombieri and Hejhal \cite{BomHej95} have both made use of \hyperref[ES]{{\bf ES}} in their work on linear combinations of $L$-functions. In particular, the latter paper made use of Montgomery's Fej{\'e}r kernel method used in Theorem \ref{thm1} to obtain partial results without using \hyperref[ES]{{\bf ES}}.


\section*{Acknowledgment of Funding}

The authors thank the American Institute of Mathematics for its hospitality and providing a pleasant environment where our work on this project began. 
The second author is currently supported by the Inamori Research Grant 2024, JSPS KAKENHI Grant Number 22K13895, and Kyushu University International Research Leader Training Program (EBXU0101).


\bibliographystyle{alpha}
\bibliography{AHReferences}

\end{document}